\def\pg{\mbox{\rm PG}}
\def\ag{\mbox{\rm AG}}
\def\q{\mbox{\rm Q}}
\def\mod{\mbox{\rm{ mod }}}
\def\C{\mathcal{C}}
\def\S{\mathcal{S}}
\def\P{\mathcal{P}}
\def\B{\mathcal{B}}
\def\I{\mathrel{\mathrm I}}
\def\O{\mathcal{O}}
\def\K{\mathcal{K}}
\def\IS{\S=(\P,\B,\I)}
\newenvironment{proof}{\noindent{\bf Proof. }}{\ignorespaces\rule{1pt}{0pt}\hfill $\square$\medskip\smallskip\smallskip}
\newtheorem{lemma}{Lemma}
\newtheorem{theorem}{Theorem}
\newtheorem{corollary}{Corollary}
\title{On large maximal partial ovoids of the parabolic quadric $\q(4,q)$\footnote{final version, appeared in Des. Codes Cryptogr., DOI:10.1007/s10623-012-9629-y}}
\author{Jan De Beule\thanks{The author is a postdoctoral research fellow of the Research Foundation Flanders -- Belgium (FWO).} }
\date{}
\begin{document}
\maketitle

\begin{abstract}
We use the representation $T_2(\O)$ for $\q(4,q)$ to show that maximal partial ovoids of 
$\q(4,q)$ of size $q^2-1$, $q=p^h$, $p$ odd prime, $h > 1$, do not exist. Although this was known 
before, we give a slightly alternative proof, also resulting in more combinatorial information of the 
known examples for $q$ odd prime.
\end{abstract}

{\bf keywords}: maximal partial ovoid, generalized quadrangle, parabolic quadric.

{\bf MSC (2010)}: 05B25, 51D20, 51E12, 51E20, 51E21.

\medskip
{\em This paper is based on joint work with Andr\'as G\'acs, started in the autumn of 2008, as a 
continuation of the work in \cite{DeBeuleGacs}.  His unfortunate and sudden death prevented us
from continuing our joint work on the geometrical interpretation of the results. I would like to dedicate this work to Andr\'as.}

\section{Introduction}

A (finite) \emph{generalized quadrangle} (GQ) is an incidence structure
$\IS$ in which $\P$ and $\B$ are disjoint non-empty sets of objects called
points and lines, and for which $\I \subseteq (\P \times
\B) \cup (\B \times \P)$ is a symmetric point-line incidence relation
satisfying the following axioms:
\begin{itemize}
\item[(i)] each point is incident with $1+t$ lines $(t \geqslant 1)$ and
two distinct points are incident with at most one line;
\item[(ii)] each line is incident with $1+s$ points $(s \geqslant 1)$ and
two distinct lines are incident with at most one point;
\item[(iii)] if $x$ is a point and $L$ is a line not incident with $x$,
then there is a unique pair $(y,M) \in \P \times \B$ for which $x \I M \I
y \I L$.
\end{itemize}
The integers $s$ and $t$ are the parameters of the GQ and $\S$ is said to
have order $(s,t)$. If $s=t$, then $\S$ is said to have order $s$. If $\S$
has order $(s,t)$, then $|\P| = (s+1)(st+1)$ and $|\B| =
(t+1)(st+1)$ (see e.g. \cite{PayneThas84}). 

An {\em ovoid} of a GQ $\S$ is a set $\O$ of points of $\S$ such that every line
is incident with exactly one point of the ovoid. An ovoid of a GQ of order
$(s,t)$ has necessarily size $1+st$. A {\em partial ovoid} of a
GQ is a set $\K$ of points such that every line contains {\em at most} one point
of $\K$. A partial ovoid $\K$ is called {\em maximal} if and only if $\K \cup
\{P\}$ is not a partial ovoid for any point $P \in \P \setminus \K$, in other words, if
$\K$ cannot be extended. It is clear that any partial ovoid of a GQ of order
$(s,t)$ contains $1+st-\rho$ points, $\rho \geq 0$, with $\rho = 0$ if and only
if $\K$ is an ovoid. 

It is a natural question to study {\em extendability} of partial ovoids, i.e. can one alway extend a partial ovoid of size $1+st-\epsilon$ (e.g. to an ovoid) if $\epsilon$ is not too big? The following theorem is a typical example. 

\begin{theorem}[{\cite[2.7.1]{PayneThas84}}]\label{theo1}
Let $\IS$ be a GQ of order $(s,t)$. Any partial ovoid of size $st-\rho$, $0 \leq \rho < \frac{t}{s}$ is contained in a uniquely defined ovoid of $\S$.
\end{theorem}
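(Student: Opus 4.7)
The plan is to identify a canonical candidate extension of $\K$ and show it forms the unique ovoid containing $\K$. Given a partial ovoid $\K$ of size $st-\rho$ with $\rho<t/s$, for each $P\in\P\setminus\K$ let $c(P)$ denote the number of points of $\K$ collinear with $P$, and set $\mathcal{E}=\{P\in\P\setminus\K:c(P)=0\}$. The strategy is to show $|\mathcal{E}|=\rho+1$; the set $\K\cup\mathcal{E}$ will then be a partial ovoid of size $st+1$, hence an ovoid, and the unique one containing $\K$.

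For the upper bound $|\mathcal{E}|\leq\rho+1$, axiom (iii) applied to $Q\in\K$ and an uncovered line $L$ produces a unique point of $L$ collinear with $Q$; summing over $Q$ gives $\sum_{P\in L}c(P)=|\K|=st-\rho$. Since $L$ is uncovered, each $P\in L$ lies on at most $t$ lines meeting $\K$, so $c(P)\leq t$. If $k_L=|L\cap\mathcal{E}|$, the $s+1-k_L$ non-holes of $L$ contribute at most $t$ each, so $(s+1-k_L)t\geq st-\rho$, i.e., $k_L\leq 1+\rho/t<1+1/s\leq 2$, hence $k_L\leq 1$. Double counting incidences between $\mathcal{E}$ and the $(t+1)(\rho+1)$ uncovered lines (each hole lies on $t+1$ of them) gives $(t+1)|\mathcal{E}|\leq(t+1)(\rho+1)$. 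The same bound $k_L\leq 1$ shows that $\K\cup\mathcal{E}$ is a partial ovoid (two distinct holes joined by a line $M$ would give either a covered line through a hole, contradicting $c(P)=0$, or an uncovered line with $k_M\geq 2$), and it is moreover maximal since any addable point would itself be a hole.

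The main obstacle is the reverse inequality $|\mathcal{E}|\geq\rho+1$, which I would prove by induction on $\rho$. The base case $\rho=0$ is self-contained: the graph on the $t+1$ uncovered lines with intersection as adjacency has each vertex of degree $\sum_{P\in L}(t-c(P))=(s+1)t-(st-\rho)=t$, so it is $t$-regular on $t+1$ vertices and hence is $K_{t+1}$; the no-triangle property of a generalized quadrangle then forces these $t+1$ pairwise intersecting lines to be concurrent through a common point, which is the unique hole. For the inductive step, once one knows $\mathcal{E}\neq\emptyset$, picking $P\in\mathcal{E}$ gives a partial ovoid $\K\cup\{P\}$ of size $st-(\rho-1)$ still satisfying $\rho-1<t/s$, and the induction hypothesis furnishes an ovoid extension. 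The sensitive case is therefore $\mathcal{E}=\emptyset$, i.e., $\K$ maximal non-ovoid; this should be ruled out by combining the global identities $\sum_{P\notin\K}c(P)=(st-\rho)s(t+1)$ and $\sum_{P\notin\K}c(P)^2=(t+1)(st-\rho)(st+s-\rho-1)$, where the second comes from $|P^{\perp}\cap Q^{\perp}|=t+1$ for non-collinear $P,Q\in\K$, together with $c(P)\geq 1$ everywhere and the sharp hypothesis $\rho<t/s$.

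Once $|\mathcal{E}|=\rho+1$ is known, $\K\cup\mathcal{E}$ is an ovoid; uniqueness follows because any ovoid through $\K$ consists of $\K$ together with $\rho+1$ compatible points, which must all lie in $\mathcal{E}$ and hence exhaust it.
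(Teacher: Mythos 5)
The paper offers no proof of this statement; it is quoted verbatim from Payne--Thas (2.7.1), so your proposal can only be judged on its own terms. Much of it is sound: the count of $(t+1)(\rho+1)$ uncovered lines, the bound $k_L\leq 1$, the consequent upper bound $|\mathcal{E}|\leq\rho+1$, the fact that $\K\cup\mathcal{E}$ is a partial ovoid, the uniqueness argument, and the base case $\rho=0$ (the $t+1$ uncovered lines pairwise meet, hence are concurrent by the absence of triangles) are all correct. The inductive step is also fine \emph{once you know} $\mathcal{E}\neq\emptyset$.

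The gap is exactly the step you flag as ``sensitive'', and the tool you propose for it does not work. Write $V=|\P\setminus\K|=s^2t+s+1+\rho$, $\Sigma_1=\sum c(P)=(st-\rho)s(t+1)$ and $\Sigma_2=\sum c(P)^2=(st-\rho)(t+1)(st+s-\rho-1)$ (your identities, which are correct). The only constraint available is $1\leq c(P)\leq t+1$ for all $P$, and the natural quadratic test gives
\[
\sum_{P\notin\K}\bigl(c(P)-1\bigr)\bigl(t+1-c(P)\bigr)=(t+2)\Sigma_1-\Sigma_2-(t+1)V=(t+1)(\rho+1)(st-s-\rho-1),
\]
which is \emph{positive} for every admissible parameter set. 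So the hypothesis $c(P)\geq 1$ everywhere is perfectly consistent with the first two moments, and no linear combination of these identities can detect the threshold $\rho<t/s$: the moments vary continuously in $\rho$ and produce no sign change at $\rho=t/s$, whereas the statement is sharp there (for $\rho=t/s$ maximal non-extendable partial ovoids do exist, cf.\ Theorem~2 of the paper, so any proof of $\mathcal{E}\neq\emptyset$ must use $\rho<t/s$ structurally). The existence of a hole for $1\leq\rho<t/s$ therefore still requires a genuinely geometric argument in the spirit of your base case --- e.g.\ exploiting that each uncovered line meets exactly $t+\rho$ other uncovered lines and is disjoint from exactly $\rho t$ of them, together with the triangle-free property --- and this is precisely the content of the cited result in Payne--Thas. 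As written, your proof establishes the theorem only for $\rho=0$ (in particular for the case $s=t$ actually used in the paper), not for general $\rho<t/s$.
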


Note that if no ovoids of a particular GQ exist, then Theorem~\ref{theo1} implies an upper bound on the size of partial ovoids. 
The following theorem deals with the limit situation, and will be of use in Section~\ref{sec:geom}.

\begin{theorem}[{\cite[2.7.2]{PayneThas84}}]\label{theo2}
Let $\IS$ be a GQ of order $(s,t)$. Let $\K$ be a maximal partial ovoid of size
$st-t/s$ of $\S$. Let $\B'$ be the set of lines incident with no point
of $\K$, and let $\P'$ be the set of points on at least one line of $\B'$ and
let $\I'$ be the restriction of $\I$ to points of $\P'$ and lines of $\B'$. Then
$\S'=(\P',\B',\I')$ is a subquadrangle of order $(s,\rho=t/s)$.
\end{theorem}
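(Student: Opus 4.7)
The plan is to pin down $\S'=(\P',\B',\I')$ by a short sequence of double counts combined with axiom~(iii) of $\S$; write $\rho:=t/s$. Since $\K$ is a partial ovoid, each line of $\S$ meets $\K$ in at most one point, so the lines meeting $\K$ number $(t+1)|\K|$ and
\[
|\B'|=(t+1)(st+1)-(t+1)|\K|=(t+1)(1+\rho),
\]
already the line-count of a GQ of order $(s,\rho)$.

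Fix $L\in\B'$ and, for $P\notin\K$, let $r_P$ denote the number of points of $\K$ collinear with $P$. For every $Q\in\K$ we have $Q\notin L$ (since $L\in\B'$), so axiom~(iii) of $\S$ supplies a unique pair $(y,M)\in L\times\B$ with $Q \I M \I y \I L$; set $y(Q,L):=y$. The no-triangle property of a GQ rules out two distinct points of $L$ sharing a common neighbour in $\K$, so the fibre of $Q\mapsto y(Q,L)$ at $P\in L$ has size $r_P$, giving
\[
\sum_{P\in L}r_P=|\K|=(s+1)(t-\rho).
\]
Moreover $L\in\B'$ contributes no point of $\K$, whence $r_P\le t$ for $P\in L$, while maximality of $\K$ forces $r_P\ge 1$ for every $P\notin\K$. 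Thus the line-average of $r_P$ on $L$ equals the target value $t-\rho$.

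Globally, the double counts $\sum_{P\notin\K}r_P=s(t+1)|\K|$ and $\sum_{P\notin\K}r_P(r_P-1)=(t+1)|\K|(|\K|-1)$ (the latter from the standard fact that any two non-collinear points of $\S$ have exactly $t+1$ common neighbours) expand into
\[
\sum_{P\notin\K}\bigl(r_P-(t-\rho)\bigr)\bigl(r_P-(t+1)\bigr)=0.
\]
The covered points in $\P\setminus(\K\cup\P')$ satisfy $r_P=t+1$ and contribute zero, while each $P\in\P'$ has $r_P\le t$; once one knows $r_P\ge t-\rho$ on $\P'$ the identity pins down $r_P\in\{t-\rho,t+1\}$ and therefore $r_P=t-\rho$ on $\P'$. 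Establishing the uniform lower bound $r_P\ge t-\rho$ is the delicate point: the quadratic identity alone admits non-uniform integer solutions, so one either invokes a higher-moment count (triples of points of $\K$ together with their common neighbours) or upgrades the per-line deficit $(s+1)\rho=\sum_{P\in L}(t-r_P)$ using the bounds $1\le r_P\le t$ and maximality in a careful line-by-line analysis. The upshot is that every $P\in\P'$ is incident with exactly $\rho+1$ lines of $\B'$, and a final double count yields $|\P'|=(s+1)(t+1)$.

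Finally, verify axiom~(iii) of $\S'$. For $P\in\P'$ and $L\in\B'$ with $P$ not incident with $L$, axiom~(iii) of $\S$ delivers a unique $(y^{*},M^{*})$ with $P \I M^{*} \I y^{*} \I L$; it suffices to show $M^{*}\in\B'$. A ``bad'' pair $(P,L)$ with $M^{*}\notin\B'$ forces some $Q\in\K$ incident with $M^{*}$. The ``good'' pairs---where $M^{*}\in\B'$---are parametrised by the ordered pairs $(M,L)$ of distinct intersecting lines of $\B'$ together with a choice of $P\in M\setminus(M\cap L)$, so there are $s\cdot 2I$ good pairs where $I$ is the number of unordered intersecting pairs in $\B'$; subtracting from the total yields
\[
\text{(number of bad pairs)} \;=\; (t+1)(1+\rho)\bigl(|\P'|-(s+1)(t+1)\bigr),
\]
which vanishes by the previous paragraph. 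Hence no bad pairs exist, $\S'$ satisfies the GQ axioms with parameters $(s,\rho)$, and is a subquadrangle of $\S$. The main obstacle throughout is the regularity step in the third paragraph: the global quadratic identity is satisfied by several non-uniform distributions of $(r_P)$, and excluding them constitutes the heart of the combinatorial argument.
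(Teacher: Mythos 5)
The paper itself offers no proof of this statement --- it is quoted directly from Payne--Thas --- so your argument has to stand on its own. The bookkeeping you do is correct: $|\B'|=(t+1)(1+\rho)$, the per-line count $\sum_{P\in L}r_P=|\K|=(s+1)(t-\rho)$, the two global moments, and the resulting identity $\sum_{P\notin\K}\bigl(r_P-(t-\rho)\bigr)\bigl(r_P-(t+1)\bigr)=0$ all check out (I verified the expansion), as does the final pair count for axiom (iii) \emph{once} the regularity $r_P=t-\rho$ on $\P'$ is known. But that regularity is exactly what you do not prove, and you say so yourself. The identity only forces $r_P\in\{t-\rho,\,t+1\}$ if one already has the lower bound $r_P\ge t-\rho$ for every $P\in\P'$ (equivalently: every point off $\K$ lies on at most $\rho+1$ lines of $\B'$); without it the sum can vanish with terms of both signs. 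That bound is the actual content of the theorem, so the proof is incomplete at its central step, and the only place maximality enters your argument ($r_P\ge 1$) is far too weak to supply it.

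Neither of the repairs you gesture at is viable as stated. A third-moment count would require evaluating $\sum_{P\notin\K}r_P(r_P-1)(r_P-2)$, i.e.\ the number of centres of triads of points of $\K$; in a general GQ of order $(s,t)$ a triad of pairwise non-collinear points does not have a constant number of centres, so this quantity is not determined by the parameters alone. The per-line information you actually have is $\sum_{P\in L}\delta_P=(s+1)(\rho+1)$ with each $\delta_P\ge 1$, where $\delta_P=t+1-r_P$ is the number of lines of $\B'$ on $P$; this constrains only the average, not the individual values. Everything downstream depends on the missing bound: the flag count giving $|\P'|=(s+1)(t+1)$, the count of concurrent ordered pairs in $\B'$ as $|\B'|(s+1)\rho$ (which your ``$s\cdot 2I$'' step silently uses), and hence the vanishing of the number of bad pairs. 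As written, the argument establishes the line count of $\S'$ and nothing more; a genuinely new idea exploiting maximality and the exact value $\rho=t/s$ is needed for the regularity step.
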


Consider the parabolic quadric $\q(4,q)$ in the $4$-dimensional projective space $\pg(4,q)$. 
This quadric is the set of points and lines that are totally isotropic with relation to a non-singular 
quadratic form on $\pg(4,q)$, which is, up to coordinate transform, unique, and its points and 
lines constitute an example of a generalized quadrangle of order $q$. 

It is known, (see e.g. \cite{PayneThas84}) that this GQ has
ovoids. A particular example of an ovoid is any elliptic quadric $\q^-(3,q)$
contained in it and obtained by a hyperplane section of $\q(4,q)$.  
When $q$ is prime, these are the only ovoids \cite{BallGovaertsStorme}; when $q$ 
is a prime power, other examples are known, see e.g. \cite{DeBeuleKleinMetsch11} 
for a list of references. The classification of ovoids of $\q(4,q)$, for $q$ prime, is essentially due to
the computation of intersection numbers (modulo $p$) of a hypothetical ovoid with elliptic quadrics, and the use of this information in a combinatorial argument.

Applying Theorem~\ref{theo1} to the $GQ$ $\q(4,q)$ implies that a partial ovoid
of size $q^2$ cannot be maximal. It is shown in \cite{DeBeuleGacs} that maximal partial
ovoids of $\q(4,q)$, $q=p^h$, $p$ an odd prime, $h > 1$, do not exist. The natural question arises
if maximal partial ovoids exist when $h=1$. Curiously, this is the case when $p \in \{3,5,7,11\}$, but 
no examples are known for $q > 11$, \cite{Penttila}. In this paper we give a slightly alternative 
proof  of the non-existence result for $h > 1$. Further, we compute the intersection numbers 
of a hypothetical maximal partial ovoid of size $q^2-1$ with elliptic quadrics embedded in $\q(4,q)
$, for $q$ an odd prime.  This yields structural information on the existing examples, and it is our 
hope that this information could contribute to finally proving their uniqueness and non-existence for
$p > 11$. 

\section{Non-existence for $q > p$}

We follow almost the same approach as in \cite{DeBeuleGacs}. Therefore we need to introduce an 
alternative representation of the GQ $\q(4,q)$. 

An {\em oval} of $\pg(2,q)$ is a set of $q+1$ points $\C$, such that no three points
of $\C$ are collinear. When $q$ is odd, it is known that all ovals of 
$\pg(2,q)$
are conics. When $q$ is even, several other examples and infinite families are
known, see e.g. \cite{Cheroweb}. The GQ $T_2(\C)$ is defined as follows. Let
$\C$ be an oval of $\pg(2,q)$, embed $\pg(2,q)$ as a plane in
$\pg(3,q)$ and denote this plane by $\pi_{\infty}$. Points are defined as
follows:
\begin{itemize}
\item[(i)] the points of $\pg(3,q) \setminus \pg(2,q)$;
\item[(ii)] the planes $\pi$ of $\pg(3,q)$ for which $|\pi \cap \C| =
1$; 
\item[(iii)] one new symbol $(\infty)$.
\end{itemize}
Lines are defined as follows:
\begin{itemize}
\item[(a)] the lines of $\pg(3,q)$ which are not contained in $\pg(2,q)$
and meet $\C$ (necessarily in a unique point);
\item[(b)] the points of $\C$.
\end{itemize}
Incidence between points of type (i) and (ii) and lines of type (a) and
(b) is the inherited incidence of $\pg(3,q)$. In addition, the point
$(\infty)$ is incident with no line of type (a) and with all lines of type
(b). It is straightforward to show that this incidence structure is a
GQ of order $q$. The following theorem (see e.g. \cite{PayneThas84}) allows us to
use this representation. 

\begin{theorem}\label{theo3}
The GQs $T_2(\C)$ and $\q(4,q)$ are isomorphic if and only if $\C$ is a conic of
the plane $\pg(2,q)$.
\end{theorem}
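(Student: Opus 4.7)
The statement is a biconditional, and I would treat the two directions by essentially disjoint methods.

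For the ``if'' direction ($\C$ a conic implies $T_2(\C)\cong\q(4,q)$), a direct projective construction suffices. I would fix a point $P\in\q(4,q)$ and a hyperplane $H\cong\pg(3,q)$ of $\pg(4,q)$ with $P\notin H$, and use the projection from $P$ onto $H$. The tangent hyperplane $T_P$ meets $\q(4,q)$ in a quadric cone with vertex $P$ over a conic $\C'\subset \pi_{\infty}:=T_P\cap H$. I would then set up the bijection $T_2(\C')\to\q(4,q)$ sending:
\begin{itemize}
\item $(\infty)$ to $P$;
\item a type (i) point $X\in H\setminus\pi_{\infty}$ to the unique second intersection of the line $PX$ with $\q(4,q)$;
\item a type (ii) point (a plane $\pi\subset H$ tangent to $\C'$ at a point $p$) to the point $Q\in(T_P\cap\q(4,q))\setminus\{P\}$ on the ruling through $p$ determined by $T_Q\cap H=\pi$;
\item a type (b) line (a point of $\C'$) to the corresponding line of $\q(4,q)$ through $P$;
\item a type (a) line (a line of $H$ meeting $\C'$ in exactly one point) to its unique preimage line of $\q(4,q)$ not through $P$.
\end{itemize}
The cardinalities match on both sides (one gets $q^3+q(q+1)+1=(q+1)(q^2+1)$ points either way, and similarly for the lines), and incidence preservation reduces to routine manipulations with tangent hyperplanes and with the projection. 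Because all conics in $\pg(2,q)$ are projectively equivalent, and $T_2(\C)$ depends on $\C$ only up to projective equivalence in $\pg(2,q)$, this settles the direction.

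For the ``only if'' direction I would split by parity. If $q$ is odd, Segre's theorem states that every oval of $\pg(2,q)$ is a conic, so the statement is automatic. If $q$ is even non-conic ovals do exist, and the argument becomes more subtle: I would exploit that $\q(4,q)$ is classical in the strongest combinatorial sense, every point being regular and every non-tangent hyperplane section yielding a sub-GQ, whereas in $T_2(\C)$ the regularity of the apex point $(\infty)$ translates, via the explicit incidence structure at $(\infty)$, into the condition that the tangents to $\C$ at the three vertices of every inscribed triangle are concurrent, i.e., that $\C$ is a conic. I expect the even case to be the main obstacle, since it must be argued carefully that the hypothetical isomorphism transports the regularity of a point of $\q(4,q)$ exactly to regularity of $(\infty)$ in $T_2(\C)$, thereby forcing the characterization of $\C$.
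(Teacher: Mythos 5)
You should first note that the paper does not actually prove Theorem~\ref{theo3}; it is quoted from \cite{PayneThas84}, so the only thing to compare your argument with is the classical proof. Your ``if'' direction is exactly that classical argument (projection of $\q(4,q)$ from one of its points onto a complementary hyperplane, going back to Tits), and modulo the incidence verifications you defer, it is correct. Your ``only if'' direction for $q$ odd via Segre's theorem is also correct and standard: for $q$ odd there are no non-conic ovals, so there is nothing to exclude.

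The even case, however, contains a genuine error. The point $(\infty)$ is regular in $T_2(\O)$ for \emph{every} oval $\O$ of $\pg(2,q)$ when $q$ is even: for an affine point $x$, the trace $\{(\infty),x\}^{\perp}$ consists of the $q+1$ planes $\langle x,t_c\rangle$, where $t_c$ is the tangent to $\O$ at $c$; since for $q$ even all tangents of an oval pass through the nucleus $n$, every such plane contains the line $xn$, and the $q$ affine points of $xn$ together with $(\infty)$ already give $|\{(\infty),x\}^{\perp\perp}|=q+1$. So regularity of $(\infty)$ cannot distinguish conics from other ovals. Your proposed plane-geometric translation (``the tangents at the vertices of every inscribed triangle are concurrent'') is likewise vacuously satisfied by every oval of even order, again because of the nucleus, and it actually fails for conics when $q$ is odd, so it is not a characterisation of conics in any characteristic. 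The correct argument (Payne--Thas 3.3.3, using $\q(4,q)\cong \w(q)$ for $q$ even and the fact that all points of $\w(q)$ are regular) must exploit the regularity of the \emph{affine} points, i.e.\ of pairs of non-collinear points of type (i); those spans do impose a nontrivial condition on $\O$, and reducing that condition to ``$\O$ is a conic'' is the genuinely hard step, which nothing in your sketch supplies. For the purposes of this paper the gap is harmless, since only $q$ odd --- indeed only the ``if'' direction --- is ever used.
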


From now on we suppose that $\C$ is a conic. Let $\K$ be a maximal partial ovoid of size 
$k$ of $T_2(\C)$. Since $\q(4,q )\cong T_2(\C)$ has a collineation group acting transitively on the
points (see e.g. \cite{HirschfeldThas91}), we can suppose that $(\infty) \in \K$. This
implies that $\K$ contains no points of type (ii). It is clear that no two points of type (i) 
of $\K$ determine a line meeting $\pi_{\infty}$ in a point of $\C$. Hence the existence of 
$\K$ implies the existence of a set $U$ of $k-1$ points of type (i) such that no two 
points determine a line meeting $\pi_{\infty}$ in $\C$. It is easy to see that the 
converse is also true: from a set $U$ of $k-1$ points in $\pg(3,q) \setminus \pi_{\infty}$ with the 
property that all lines joining at least two points of $U$  are disjoint from $\C$, we can find a
partial ovoid $\K$ of $T_2(\C)$ of size $k$ by adding $(\infty)$ to $U$. 
The maximality of $\K$ is equivalent to the maximality of $U$. 

Hence the existence of a maximal partial ovoid of size $q^2-1$ of $\q(4,q)$, is equivalent with 
the existence of a set $U$ of $q^2-2$ affine points, not determining the points of a conic 
at infinity. In \cite{DeBeuleGacs}, it is shown that such a set $U$ can always be extended 
when $q > p$.  In fact, only the assumption that at least $p+2$ points are not determined
is used. In this paper we will assume that the points of a conic are not determined and that $U$ is not extendable, 
compute the range of a certain polynomial, and find a contradiction when $q > p$.
In the third section, we will describe the use of this particular polynomial to compute the 
intersection numbers modulo $p$ of the point set $U$ with planes of $\ag(3,q)$. This will yield intersection 
numbers modulo $p$ of the maximal partial ovoid of size $q^2-1$ with elliptic 
quadrics embedded in $\q(4,q)$. 

From now on, let $\K$ denote a partial ovoid of $\q(4,q)$, $q=p^h$, $p$ an odd prime and $h \geq 1$. 
Let $U$ denote the point set of $\pg(3,q) \setminus \pi_{\infty}$ corresponding with the partial ovoid of $\q(4,q)$. 

\begin{lemma}\label{le:pigeon}
If a plane $\pi \neq \pi_{\infty}$ of $\pg(3,q)$, meets $\C$ in at least one point, then $|\pi \cap U | \leq q$.
\end{lemma}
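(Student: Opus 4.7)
The plan is a direct pigeonhole argument using the affine structure of the plane $\pi$ together with the defining property of $U$ (no two points of $U$ determine a line meeting $\C$).

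First I would observe that since $\pi \neq \pi_\infty$, the intersection $\ell := \pi \cap \pi_\infty$ is a line of $\pi_\infty$, and by hypothesis there is at least one point $P \in \ell \cap \C$. The set $\pi \cap U \subseteq \pi \setminus \pi_\infty$ lies in the affine plane $\pi \setminus \ell$, which has exactly $q^2$ points.

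Next I would look at the pencil of lines of $\pi$ through the chosen point $P$. Apart from $\ell$ itself, there are exactly $q$ such lines, and each of them meets $\pi_\infty$ only in $P$; hence their affine parts each contain $q$ points and together partition the $q^2$ affine points of $\pi$ into $q$ classes of size $q$. Since $P \in \C$, any two points of $U$ lying on a common line of this pencil would produce a line joining two points of $U$ that meets $\pi_\infty$ in a point of $\C$, contradicting the construction of $U$ from the partial ovoid $\K$. Therefore each of the $q$ affine lines through $P$ contains at most one point of $U$, giving $|\pi \cap U| \leq q$.

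No genuine obstacle is expected here: the lemma is purely a counting consequence of the translation between $\K$ and $U$ set up in the paragraphs preceding the statement, and the only subtlety is remembering that a second point of $\C$ in $\ell$ (if it exists) gives no extra information — one point suffices to drive the pigeonhole.
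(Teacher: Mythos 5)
Your proposal is correct and is essentially the paper's own argument: the paper likewise picks a point $P \in \C \cap \pi$ and notes that if $|\pi \cap U| > q$ then one of the $q$ affine lines of $\pi$ through $P$ would contain two points of $U$, contradicting the fact that $U$ does not determine $P$. Your write-up merely spells out the partition of the $q^2$ affine points of $\pi$ into the $q$ pencil classes a bit more explicitly.
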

\begin{proof}
Let $P \in \C \cap \pi$. If $|\pi \cap U | > q$, then at least one of the $q$ lines of 
$\pi$ through $P$ must contain two points of $U$. This contradicts the fact 
that $U$ does not determine any point of $\C$. Hence, $|\pi \cap U| \leq q$.
\end{proof}

We choose $\pi_{\infty}$ to be the plane with equation $X_3 = 0$. Then any line $l$ of $\pi_{\infty}$ 
is determined by the equation $yX_0 + zX_1 + wX_2 = 0$, $(y,z,w) \in \mathbb{F}_q^3 \setminus \{(0,0,0)\}$. 
We denote such a line as $l(y,z,w)$. Any plane $\pi \neq \pi_{\infty}$ through $l(y,z,w)$ is determined 
by the equation $yX_0 + zX_1 + wX_2 +xX_3= 0$. We denote such a plane as $\pi(x,y,z,w)$.

The point set $U=\{ (a_i,b_i,c_i,1):i=1,\dots ,q^2-2 \}\subset \pg(3,q) \setminus \pi_{\infty}$. We define the 
R\'edei polynomial associated to the point set $U$ as follows:
\[
R(X,Y,Z,W)=\prod _{i=1}^{q^2-2}(X+a_iY+b_iZ+c_iW)=
X^{q^2-2}+\sum _{i=1}^{q^2-2}\sigma _i(Y,Z,W)X^{q^2-2-i}\,.
\]
Here $\sigma _i(Y,Z,W)$ is the $i$-th elementary symmetric polynomial of the 
multi-set $\{ a_iY+b_iZ+c_iZ:i\}$ and is either zero or has degree $i$. 

\begin{lemma}\label{le:div}
Suppose that the line $l(y,z,w)$ meets $\C$ in at least one point. Then $R(X,y,z,w) \mid (X^q-X)^q$.
\end{lemma}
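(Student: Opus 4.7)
The plan is to read off the factorization of $R(X,y,z,w)$ over $\mathbb{F}_q$ and to bound the root multiplicities geometrically via Lemma~\ref{le:pigeon}. Since $(X^q-X)^q = \prod_{a\in\mathbb{F}_q}(X-a)^q$, it suffices to show that every root of $R(X,y,z,w)$ lies in $\mathbb{F}_q$ and that each root has multiplicity at most $q$.

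First I would substitute $(Y,Z,W)=(y,z,w)$ into the definition, obtaining
\[
R(X,y,z,w)=\prod_{i=1}^{q^2-2}\bigl(X+t_i\bigr),\qquad t_i:=a_iy+b_iz+c_iw\in\mathbb{F}_q,
\]
so all roots are automatically in $\mathbb{F}_q$. Next I would identify the multiplicity of a given root $-a\in\mathbb{F}_q$ with a geometric count: $-a$ is a root coming from index $i$ precisely when $a_iy+b_iz+c_iw=a$, i.e.\ when the affine point $(a_i,b_i,c_i,1)$ lies on the plane $\pi(-a,y,z,w)$ with equation $yX_0+zX_1+wX_2-aX_3=0$. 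Hence the multiplicity of $X+a$ as a factor of $R(X,y,z,w)$ equals $|\pi(-a,y,z,w)\cap U|$.

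Now I would use the hypothesis. The line $l(y,z,w)$ lies in every plane of the form $\pi(x,y,z,w)$, so in particular $l(y,z,w)\subset \pi(-a,y,z,w)$. Since by assumption $l(y,z,w)$ meets $\C$ in at least one point, the plane $\pi(-a,y,z,w)$ meets $\C$ in at least one point, and Lemma~\ref{le:pigeon} gives $|\pi(-a,y,z,w)\cap U|\leq q$. Therefore every factor $X+a$ appears with multiplicity at most $q$ in $R(X,y,z,w)$, yielding
\[
R(X,y,z,w)=\prod_{a\in\mathbb{F}_q}(X+a)^{m_a},\qquad 0\le m_a\le q,
\]
and since $\prod_{a\in\mathbb{F}_q}(X+a)=X^q-X$, this product divides $(X^q-X)^q$.

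There is no real obstacle here: the statement is essentially a repackaging of Lemma~\ref{le:pigeon} into polynomial language. The only point that requires a touch of care is the translation between the multiplicity of $X+a$ in $R$ and the cardinality of $\pi(-a,y,z,w)\cap U$, which is just a matter of matching the equation of the plane with the linear forms in the Rédei product.
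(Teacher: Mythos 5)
Your proof is correct and follows essentially the same route as the paper: identify the multiplicity of each root of $R(X,y,z,w)$ with the cardinality of the corresponding plane section $|\pi(\cdot,y,z,w)\cap U|$ and bound it by $q$ via Lemma~\ref{le:pigeon}, noting all roots lie in $\mathbb{F}_q$. The sign bookkeeping matching the factor $X+a$ with the plane $\pi(-a,y,z,w)$ is handled correctly, so there is nothing to add.
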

\begin{proof}
If $x \in \mathbb{F}_q$ is a root of $R(X,y,z,w) = 0$, then its multiplicity equals $|\pi(x,y,z,w) 
\cap U| \leq q$ (the latter by Lemma~\ref{le:pigeon}). Since $|U| = q^2-2$, each of the $q$ 
planes $\pi(x,y,z,w)$, $x \in \mathbb{F}_q$, contains points of $U$, hence $R(X,y,z,w)=0$ has each 
element $x \in \mathbb{F}_q$ as root, with multiplicity at most $q$, and the lemma follows.
\end{proof}

Since we suppose that $q$ is odd and $|U|=q^2-2$, after the affine translation 
\[ a_i \mapsto a_i - \frac{\sum a_i}{q^2-2}, \qquad
b_i \mapsto b_i - \frac{\sum b_i}{q^2-2}, \qquad
c_i \mapsto c_i - \frac{\sum c_i}{q^2-2}, \]
not affecting the (non)-determined points at infinity, we may assume that $\sum a_i = \sum b_i = \sum c_i = 0$, which is equivalent to $\sigma_1(Y,Z,W) \equiv 0$.  

\begin{lemma}\label{le:sigma2}
If a line $l(y,z,w)$ has at least one common point with $\C$, then 
\begin{equation}\label{eq:sigma2}
R(X,y,z,w)(X^2-\sigma _2(y,z,w))=(X^q-X)^q.
\end{equation}
\end{lemma}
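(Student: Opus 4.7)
The plan is to invoke Lemma~\ref{le:div} to get a factorization $(X^q - X)^q = R(X,y,z,w)\cdot Q(X)$ for some monic $Q(X) \in \mathbb{F}_q[X]$, and then pin $Q(X)$ down by a short coefficient comparison. Since $\deg R = q^2 - 2$ and $\deg (X^q-X)^q = q^2$, the cofactor $Q(X)$ must have degree $2$, so $Q(X) = X^2 + \alpha X + \beta$ for some $\alpha, \beta \in \mathbb{F}_q$ depending on $(y,z,w)$. My goal is to show $\alpha = 0$ and $\beta = -\sigma_2(y,z,w)$.

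First I would simplify the right-hand side: since $q = p^h$ and the Frobenius is additive in characteristic $p$, we have $(X^q - X)^q = X^{q^2} - X^q$, with no intermediate terms. Next I would expand
\[
R(X,y,z,w)\cdot Q(X) \;=\; \bigl(X^{q^2-2} + \sigma_2(y,z,w) X^{q^2-4} + \sigma_3(y,z,w) X^{q^2-5} + \cdots\bigr)\bigl(X^2 + \alpha X + \beta\bigr),
\]
where the $\sigma_1 X^{q^2-3}$ term has already been killed by the normalization $\sigma_1(Y,Z,W) \equiv 0$ arranged just before the statement.

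Then I would match, one by one, the coefficients of the three top monomials $X^{q^2}$, $X^{q^2-1}$, $X^{q^2-2}$ against those of $X^{q^2} - X^q$. Since $q \geq 3$, both $q^2-1$ and $q^2-2$ exceed $q$, so those two coefficients on the right are zero. The matching gives $1 = 1$, then $\alpha + \sigma_1 = 0$ hence $\alpha = 0$, and finally $\beta + \alpha\sigma_1 + \sigma_2 = 0$ hence $\beta = -\sigma_2(y,z,w)$. Plugging back yields exactly \eqref{eq:sigma2}.

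The argument is essentially bookkeeping, and I do not anticipate a substantive obstacle. The two subtleties worth being attentive to are that the normalization $\sigma_1 \equiv 0$ really is legitimate (the translation that produced it is well defined because $q^2 - 2 \equiv -2 \pmod p$ is invertible for $p$ odd), and that by looking only at the top three coefficients of the product one never needs to know any $\sigma_i$ beyond $\sigma_2$, which is why this clean quadratic identity pops out at all.
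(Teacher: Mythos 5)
Your proof is correct and follows essentially the same route as the paper's: both invoke Lemma~\ref{le:div}, observe that the cofactor is a monic quadratic (the paper writes it as $(X-S)(X-S')$, you as $X^2+\alpha X+\beta$, an immaterial difference), and determine it by comparing the top three coefficients using $\sigma_1 \equiv 0$. Your added remarks on the invertibility of $q^2-2$ modulo $p$ and on $q^2-2>q$ are fine points the paper leaves implicit.
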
 
\begin{proof}
From Lemma~\ref{le:div} we know that
\[
R(X,y,z,w)(X-S)(X-S')=(X^q-X)^q \label{eq:lacu},
\]
where $S$ and $S'$ are not necessarily different and depend on $y,z,w$. 
Considering the first three terms on both sides and taking into account that
$\sigma_1(Y,Z,W) \equiv 0$, we have  $(X-S)(X-S')=X^2-\sigma _2(y,z,w)$. 
\end{proof}

\begin{lemma}\label{le:sigmas}
Suppose that the line $l(y,z,w)$ meets $\C$ in at least one point. Then
\[\sigma_{2l+1}(y,z,w) = 0, 0 \leq l \leq \frac{q^2-3}{2},\]
\[\sigma_{2l}(y,z,w) = \sigma_2^l(y,z,w), 0 \leq l \leq \frac{q^2-q-2}{2},\]
\[\sigma_{q^2-q+2k}(y,z,w) =
\sigma_2^{\frac{q^2-q+2k}{2}}(y,z,w)-\sigma_2^k(y,z,w), 0 \leq k \leq
\frac{q-3}{2}. \]
\end{lemma}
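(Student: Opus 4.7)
The plan is to expand the identity of Lemma~\ref{le:sigma2}, compare coefficients of each power of $X$, and then unwind the resulting linear recursion on the $\sigma_i(y,z,w)$ by induction. Since the characteristic is $p$, one has $(X^q-X)^q = X^{q^2}-X^q$, so the right-hand side has at most two nonzero coefficients. Writing out
\[
R(X,y,z,w)\bigl(X^2-\sigma_2(y,z,w)\bigr) = X^{q^2}-X^q
\]
with $R(X,y,z,w)=X^{q^2-2}+\sum_{i=1}^{q^2-2}\sigma_i X^{q^2-2-i}$, using $\sigma_1(y,z,w)\equiv 0$ and adopting the convention $\sigma_0(y,z,w)=1$, a direct expansion of the product and identification of the coefficient of $X^{q^2-i}$ on both sides yields the recursion
\[
\sigma_i(y,z,w)-\sigma_2(y,z,w)\sigma_{i-2}(y,z,w) = \begin{cases} -1 & \text{if } i=q^2-q,\\ 0 & \text{for all other } 3\leq i\leq q^2-2,\end{cases}
\]
together with the two lowest-order relations $\sigma_2\sigma_{q^2-3}=0$ and $\sigma_2\sigma_{q^2-2}=0$ (which are automatically consistent with the three formulas and play no role in deriving them).

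Because $q$ is odd, the exceptional index $q^2-q=q(q-1)$ is even, so odd indices are never exceptional, and the recursion splits cleanly along parity. For odd indices, iterating $\sigma_{2l+1}=\sigma_2\sigma_{2l-1}$ from $\sigma_1=0$ gives $\sigma_{2l+1}=\sigma_2^l\cdot\sigma_1=0$ throughout the admissible range $0\leq l\leq (q^2-3)/2$, which is the first formula. For even indices $2l\leq q^2-q-2$, iterating from $\sigma_0=1$ gives $\sigma_{2l}=\sigma_2^l$, which is the second formula. At the exceptional step $i=q^2-q$, the $-1$ enters and yields
\[
\sigma_{q^2-q}(y,z,w)=\sigma_2(y,z,w)\cdot\sigma_2^{(q^2-q-2)/2}(y,z,w)-1=\sigma_2^{(q^2-q)/2}(y,z,w)-1,
\]
which is the $k=0$ case of the third formula.

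Finally, for $i=q^2-q+2k$ with $1\leq k\leq (q-3)/2$, none of these indices coincides with the exceptional one, so iterating $\sigma_i=\sigma_2\sigma_{i-2}$ starting from $\sigma_{q^2-q}$ gives
\[
\sigma_{q^2-q+2k}(y,z,w) = \sigma_2^k(y,z,w)\cdot\sigma_{q^2-q}(y,z,w) = \sigma_2^{(q^2-q+2k)/2}(y,z,w)-\sigma_2^k(y,z,w),
\]
completing the third formula. There is no real obstacle: the whole proof is a routine coefficient comparison plus a two-line induction. The only thing that demands care is the bookkeeping around $i=q^2-q$ and the verification that the ranges in the three claimed formulas together partition $\{1,2,\ldots,q^2-2\}$, which is an immediate parity check using that $q$ is odd.
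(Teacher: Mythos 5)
Your proof is correct and follows exactly the route the paper intends: the paper's own proof is the one-line remark that the lemma follows from ``computation of both sides of Equation~\eqref{eq:sigma2}'' together with $\sigma_1\equiv 0$, and you have simply carried out that coefficient comparison and the resulting two-term recursion in full, with the correct bookkeeping at the exceptional index $i=q^2-q$. The only cosmetic point is that the low-order relation $\sigma_2\sigma_{q^2-3}=0$ is not merely ``consistent'' with the three formulas but is genuinely extra information (it forces $\sigma_2(y,z,w)$ to be a square or zero, which the paper exploits in the next lemma); as you say, though, it plays no role in proving the present statement.
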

\begin{proof}
Computation of both right-hand and left-hand sides of Equation~\eqref{eq:sigma2}, 
and the use of $\sigma_1(Y,Z,W) \equiv 0$ proves the lemma.
\end{proof}

\begin{corollary}\label{cor:sigmas}
\[\sigma_{2l+1}(Y,Z,W) \equiv 0,\, 0 \leq l \leq \frac{q-1}{2},\]
\[\sigma_{2l}(Y,Z,W) \equiv \sigma_2^l(Y,Z,W), \,0 \leq l \leq \frac{q-1}{2}.\]
\end{corollary}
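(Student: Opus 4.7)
The plan is to promote the pointwise identities of Lemma~\ref{le:sigmas} to genuine polynomial identities in $Y,Z,W$ by exploiting the fact that lines meeting $\C$ form a very rich set. Specifically, for each $P = (p_0:p_1:p_2)$ on the conic $\C$, every line of $\pi_{\infty}$ through $P$ automatically meets $\C$ at $P$, and the dual coordinates $(y,z,w)$ of such lines are precisely the $\mathbb{F}_q$-points of the hyperplane $\{L_P = 0\} \subset \mathbb{F}_q^3$, where $L_P(Y,Z,W) := p_0 Y + p_1 Z + p_2 W$. Lemma~\ref{le:sigmas} will then imply that, for every $l$ in the range of the corollary, the two homogeneous polynomials
\[
F_l := \sigma_{2l+1}(Y,Z,W), \qquad G_l := \sigma_{2l}(Y,Z,W) - \sigma_2(Y,Z,W)^l
\]
vanish at every $\mathbb{F}_q$-point of $\{L_P = 0\}$.

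The main intermediate tool I would establish is a divisibility lemma: any homogeneous polynomial $H \in \mathbb{F}_q[Y,Z,W]$ of degree at most $q$ that vanishes at every $\mathbb{F}_q$-point of a hyperplane through the origin is divisible by the linear form cutting out that hyperplane. This is routine once one reduces via a linear change of coordinates to the case $L_P = Y$ and inspects the binary form $H(0,Z,W)$: for $\deg H < q$ Lagrange interpolation at $Z = 1$ immediately forces $H(0,Z,W) \equiv 0$, while the borderline case $\deg H = q$ requires combining vanishing on $\{Z = 1, W \in \mathbb{F}_q\}$ with the additional vanishing at the coordinate points $(Z,W) = (1,0)$ and $(0,1)$ in order to annihilate the Frobenius-type residue $c(W^q - Z^{q-1}W)$.

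With this lemma in hand, running $P$ over all $q+1$ points of $\C$ produces $q+1$ pairwise non-proportional (hence pairwise coprime) linear divisors $L_P$ of both $F_l$ and $G_l$. Their product has degree $q+1$, so if either $F_l$ or $G_l$ were nonzero it would have degree at least $q+1$. But $\deg F_l \leq 2l+1 \leq q$ and $\deg G_l \leq 2l \leq q-1$, so both are forced to vanish identically in $\mathbb{F}_q[Y,Z,W]$, which is exactly the content of the corollary. The only genuinely delicate step is the degree-$q$ case of the divisibility lemma; the rest is bookkeeping, and the restriction $l \leq (q-1)/2$ in the corollary is tight precisely because beyond it the combined degree of $\prod_{P \in \C} L_P$ no longer dominates $\deg F_l$.
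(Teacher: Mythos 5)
Your proof is correct and follows essentially the same route as the paper: both restrict the pointwise identities of Lemma~\ref{le:sigmas} to the $q+1$ dual lines of the points of $\C$ and conclude by a degree argument that the polynomials $\sigma_{2l+1}$ and $\sigma_{2l}-\sigma_2^l$, having degree at most $q$, must vanish identically. The only cosmetic difference is that the paper invokes B\'ezout's theorem to force each dual line to be a component of the corresponding curve, whereas you establish the same divisibility by hand via a linear change of coordinates and root counting on the restricted binary form.
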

\begin{proof}
Consider any line $l(y,z,w)$ meeting $\C$ in at least one point. By Lemma~\ref{le:sigmas}, 
the equations of the corollary are true after subsituting $Y=y$, $Z=z$, $W=w$. 
But for each point $P \in \C$, each line $l(y,z,w)$ on $P$ gives a substitution for 
which the equations are true. Dually, this means that the points of at least $q+1$ 
different lines are a solution of the equations of the corollary. Since the degree of each equation 
is at most $q$, by the theorem of B\'ezout, each curve represented by an equation must 
contain $q+1$ lines as a component. But then its degree must be at least $q+1$. 
Hence, the polynomials are identically zero.
\end{proof}

We define now the polynomials $S_j$, $j=0,\ldots, q-1$ as follows.
\[
S_j(Y,Z,W) := \sum_{i=1}^{q^2-2} (a_iY+b_iZ+c_iW)^j\,.
\]
The Newton identities describe a relation between the polynomials $S_j(Y,Z,W)$ and $\sigma_i(Y,Z,W)$ as follows:
\[
k\sigma_k(Y,Z,W) \equiv \sum_{j=1}^k(-1)^{j-1}S_j(Y,Z,W) \sigma_{k-1}(Y,Z,W) \,.
\]

\begin{lemma}\label{le:sj}
\[S_{2l+1}(Y,Z,W) \equiv 0,\, 0 \leq l \leq \frac{q-1}{2},\]
\[S_{2l}(Y,Z,W) \equiv -2 \sigma_2^l(Y,Z,W), \, 0 \leq l \leq \frac{q-1}{2}.\]
\end{lemma}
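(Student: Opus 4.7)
The plan is to proceed by induction on $j$, using Newton's identities together with the values of $\sigma_k$ computed in Corollary~\ref{cor:sigmas}. The core observation is that, since by that corollary every odd-indexed $\sigma_k$ vanishes for $k \leq q$ and every even-indexed one is $\sigma_2^{k/2}$, Newton's identity collapses to a very short recurrence, and the stated parity pattern for $S_j$ falls out immediately.

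For the base case, I would note that $S_0 = q^2 - 2 \equiv -2 \pmod p$ (matching $-2\sigma_2^0$), while $S_1(Y,Z,W) = Y\sum a_i + Z\sum b_i + W\sum c_i \equiv 0$ by the affine normalization that forces $\sigma_1 \equiv 0$. For the inductive step with $k = 2l+1 \leq q$, Newton's identity reads
\[
(2l+1)\sigma_{2l+1} \;=\; \sum_{j=1}^{2l+1}(-1)^{j-1} S_j\,\sigma_{2l+1-j}.
\]
The left-hand side is $0$ by Corollary~\ref{cor:sigmas}. On the right, the terms with $j$ even have $\sigma_{2l+1-j}$ of odd index, hence $\equiv 0$; the remaining terms have $j = 2m+1$ and $\sigma_{2l-2m} \equiv \sigma_2^{l-m}$. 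Applying the inductive hypothesis $S_{2m+1} \equiv 0$ for $m < l$ leaves exactly the single term $S_{2l+1}\sigma_2^0 \equiv 0$, giving $S_{2l+1} \equiv 0$.

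For $k = 2l$ with $1 \leq l \leq (q-1)/2$, the same identity gives
\[
2l\,\sigma_2^l \;\equiv\; -\sum_{m=1}^{l} S_{2m}\,\sigma_2^{l-m},
\]
once the odd-$j$ contributions are discarded (again because $\sigma_{2l-j}$ then has odd index). Substituting $S_{2m} \equiv -2\sigma_2^m$ for $m < l$ from the inductive hypothesis yields $2l\,\sigma_2^l \equiv 2(l-1)\sigma_2^l - S_{2l}$, and solving produces $S_{2l} \equiv -2\sigma_2^l$, completing the induction.

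The only mildly delicate point is verifying that the ranges match up cleanly: one must check that for every $k$ in the allowed range the values $\sigma_{k-j}$ that appear on the right-hand side of Newton's identity are themselves covered by Corollary~\ref{cor:sigmas}, i.e. have index at most $q-1$. Since $k \leq q$ and $j \geq 1$, this is automatic, so there is no genuine obstacle — the argument is essentially bookkeeping with parities and the preassembled identities for $\sigma_i$.
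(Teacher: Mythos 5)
Your proof is correct and follows exactly the route the paper intends: induction via Newton's identities combined with Corollary~\ref{cor:sigmas}, with the parity argument killing all but one term in each identity (the paper's proof is just a one-line sketch of this). You also implicitly and correctly read the Newton identity with $\sigma_{k-j}$ rather than the $\sigma_{k-1}$ misprinted in the paper's display.
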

\begin{proof}
Using Corollary~\ref{cor:sigmas}, the Newton identities, the fact that $S_1(Y,Z,W) \equiv \sigma_1(Y,Z,W)$, $\sigma_0 = 1$,  and induction, the lemma follows.
\end{proof}

\begin{lemma}
If $\sigma_2(Y,Z,W)$ is reducible then the set $U$ is extendable.
\end{lemma}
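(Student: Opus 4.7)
The plan is to derive a contradiction from the joint assumptions that $U$ is non-extendable and $\sigma_2$ is reducible. Write $\sigma_2(Y,Z,W) = L_1(Y,Z,W)\,L_2(Y,Z,W)$, and let $P_i \in \pi_{\infty}$ be the point such that $L_i(y,z,w) = 0$ if and only if $l(y,z,w) \ni P_i$. A useful immediate consequence of Lemma~\ref{le:sigma2} is that, whenever $l(y,z,w)$ meets $\C$, the quadratic factor $X^2 - \sigma_2(y,z,w)$ must divide $(X^q - X)^q$, which splits completely over $\mathbb{F}_q$; hence $\sigma_2(y,z,w) \in \{0\} \cup (\mathbb{F}_q^\times)^2$. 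Denoting by $T \subseteq \mathbb{P}^2(\mathbb{F}_q)$ the set of dual points of non-external lines, this means $T \subseteq \Sigma := \{(y,z,w) : \sigma_2(y,z,w) \in \{0\} \cup (\mathbb{F}_q^\times)^2\}$.

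The crucial step, in the main case where $L_1,L_2$ are linearly independent, is a cardinality match: one computes $|T| = (q+1) + \binom{q+1}{2} = (q+1)(q+2)/2$ and (choosing coordinates so that $L_1 = Y, L_2 = Z$) $|\Sigma| = (2q+1) + q(q-1)/2 = (q+1)(q+2)/2$. Since $T \subseteq \Sigma$ and the sizes coincide, $T = \Sigma$. In particular $\{L_1 = 0\} \subseteq T$, so every line of $\pi_{\infty}$ through $P_1$ meets $\C$. Through a point of $\pi_{\infty}$ outside $\C$ there are always external lines, so this forces $P_1 \in \C$.

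The contradiction is then closed by a pencil count. With $P_1 \in \C$, every $l(y,z,w)\ni P_1$ meets $\C$ and satisfies $\sigma_2(y,z,w)=0$; Lemma~\ref{le:sigma2} then gives $R(X,y,z,w)\cdot X^2 = (X^q - X)^q$, so $|U \cap \pi(0,y,z,w)| = q-2$. The $q+1$ planes $\pi(0,y,z,w)$ with $l(y,z,w)\ni P_1$ are exactly the planes through the projective line joining the origin to $P_1$; letting $\ell$ denote the affine part of that line, a double-count of incidences yields
\[
(q+1)(q-2) \;=\; \sum_{\pi\supset\ell}|U\cap\pi| \;=\; |U| + q|U\cap\ell| \;=\; q^2 - 2 + q|U\cap\ell|,
\]
and hence $|U \cap \ell| = -1$, which is impossible. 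Thus $U$ must be extendable.

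The main obstacle is the degenerate case $L_1 = L_2$ (i.e., $\sigma_2 = L^2$): then $\Sigma = \mathbb{P}^2(\mathbb{F}_q)$ and the cardinality match above becomes vacuous, so one cannot conclude $\{L = 0\} \subseteq T$. I would handle this case separately, exploiting the moment identities imposed by $-2L^2 = \sum_i(a_iY+b_iZ+c_iW)^2$ together with the congruence $|U \cap \pi(0,y,z,w)| \equiv -2 \pmod p$ on $\{L = 0\}$ coming from $S_{q-1} \equiv -2\sigma_2^{(q-1)/2}$ (Lemma~\ref{le:sj}), to either rule out this configuration outright or to recover a negative-count contradiction through a suitable variant of the pencil argument above.
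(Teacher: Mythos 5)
Your handling of the case where $L_1$ and $L_2$ are linearly independent is correct, and it is a genuinely different route from the paper's: the exact cardinality match $|T|=|\Sigma|=(q+1)(q+2)/2$, forcing $T=\Sigma$ and hence a point $P_1\in\C$ all of whose pencil lines satisfy $\sigma_2=0$, followed by the pencil count $(q+1)(q-2)=q^2-2+q|U\cap\ell|$ and the absurdity $|U\cap\ell|=-1$, is clean and complete. (The paper instead observes that $\sigma_2$ takes only square values on each of the $q+1$ dual lines attached to the points of $\C$, which already forces the two linear factors to be proportional.) Your count also disposes, though you do not say so, of $\sigma_2=cL^2$ with $c$ a non-square, since then $\Sigma=\{L=0\}$ has only $q+1$ points and cannot contain $T$.

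The gap is the case you defer, $\sigma_2=L^2$: this is not a degenerate nuisance but the entire content of the lemma, and the strategy you sketch for it cannot succeed. The configuration cannot be ``ruled out outright,'' nor can any counting or mod-$p$ argument extract a contradiction from it, because it genuinely occurs: delete any two points from the affine point set of an elliptic-quadric ovoid of $T_2(\C)$ through $(\infty)$; the resulting $U$ of size $q^2-2$ determines no point of $\C$, is extendable, and (after the normalizing translation) has $\sigma_2$ equal to the square of a linear form --- one sees this by dividing $(X^q-X)^q$ by the two deleted R\'edei factors and comparing with Lemma~\ref{le:sigma2}. In this case the correct conclusion is extendability, and the factorization hands you the extension explicitly: writing $\sigma_2=(AY+BZ+CW)^2$, the points $(A,B,C,1)$ and $(-A,-B,-C,1)$ contribute exactly the factors $X\pm(AY+BZ+CW)$, so $U^*=U\cup\{(A,B,C,1),(-A,-B,-C,1)\}$ has R\'edei polynomial $(X^q-X)^q$ at every $(y,z,w)$ dual to a line meeting $\C$; every plane on such a line then contains exactly $q$ points of $U^*$, and the count $q^2=r+(q+1)(q-r)$ for a hypothetical line with $r\ge2$ points of $U^*$ through a point of $\C$ shows that $U^*$ still determines no point of $\C$. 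Without producing this extension the lemma is not proved; the moment identities and congruences you propose hold equally for the extendable examples just described, so they cannot close this case.
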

\begin{proof}
Suppose that $\sigma_2(Y,Z,W)$ is reducible. By equation~\eqref{eq:sigma2}, $\sigma_2(y,z,w)$ 
must be a square for any $(y,z,w)$ such that $l(y,z,w)$ meets $\C$ in at least one point. So there 
are triples $(y,z,w)$, contained in a line (the dual of the pencil of lines through a point $P \in \C$) for  
which $\sigma_2(y,z,w)$ is a square. It follows that $\sigma_2(Y,Z,W) = (AY+BZ+CW)^2$. 
Now define $U^* := U \cup \{(A,B,C,1),(-A,-B,-C,1)\}$. Consider any point $P \in \C$ and any line $l
(y,z,w)$ on $P$. From Equation~\eqref{eq:sigma2} it follows that each plane on $l$ now contains 
exactly $q$ points of $U^*$. But if $P$ is a point determined by $U^*$, then there exists a line $m$ 
on  $P$ containing $r \geq 2$ points of $U^*$. But all $q+1$ planes on $m$ contain exactly $q$ 
points of $U^*$, so $q^2 = |U^*|=r+(q+1)(q-r)$, a contradiction. Hence, $U^*$ does not determine 
the points of $\C$.
\end{proof}

\begin{theorem}
If $U$ is not extendable, then $q=p$.
\end{theorem}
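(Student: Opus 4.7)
The plan is to assume $U$ is not extendable and derive a contradiction whenever $h \geq 2$. By the preceding lemma, non-extendability forces $\sigma_2(Y,Z,W)$ to be irreducible, so it defines a nondegenerate conic $\C'$ in the dual plane of $\pi_\infty$. The first substantial step is to identify $\C'$ with the dual conic $\C^*$ of $\C$ (the conic whose points are the tangent lines of $\C$).

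By Lemma~\ref{le:sigma2}, whenever $l(y,z,w) \cap \C \neq \emptyset$ the polynomial $X^2 - \sigma_2(y,z,w)$ divides $(X^q - X)^q$, which forces $\sigma_2(y,z,w)$ to be a square in $\mathbb{F}_q$ (or zero). For each $P \in \C$, the set of dual points $(y,z,w)$ with $P \in l(y,z,w)$ is a line $P^*$ in the dual plane, and every such triple satisfies $l(y,z,w) \cap \C \neq \emptyset$. Hence the restriction of $\sigma_2$ to $P^*$ is a quadratic polynomial over $\mathbb{F}_q$ taking only square (or zero) values. A standard character-sum argument (valid in the range of interest, $q \geq 9$) shows that a quadratic with nonzero discriminant cannot take only square values on $\mathbb{F}_q$, so this restriction must be a perfect square, meaning $P^*$ is tangent to $\C'$. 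The $q+1$ lines $\{P^* : P \in \C\}$ are precisely the $q+1$ tangents of $\C^*$, so $\C'$ and $\C^*$ are nondegenerate conics sharing the same $q+1$ tangent lines, which forces $\C' = \C^*$. Choosing coordinates so that $\C = \{X_0 X_2 = X_1^2\}$ (hence $\C^* = \{Z^2 - 4YW = 0\}$), we obtain
\[
\sigma_2(Y,Z,W) = \mu(Z^2 - 4YW) \qquad \text{for some } \mu \in \mathbb{F}_q^*.
\]

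Now suppose $h \geq 2$, so $p+1 \leq q-1$, and Lemma~\ref{le:sj} gives the polynomial identity
\[
\sum_{i=1}^{q^2-2}(a_i Y + b_i Z + c_i W)^{p+1} \;=\; -2\, \sigma_2(Y,Z,W)^{(p+1)/2}
\]
in $\mathbb{F}_q[Y,Z,W]$. Specialising to $Z = 0$, the left hand side becomes $\sum_i (a_i Y + c_i W)^{p+1}$. The characteristic-$p$ identity $\binom{p+1}{k} \equiv 0 \pmod{p}$ for $2 \leq k \leq p-1$ collapses each binomial expansion, so this expression is supported only on the four monomials $Y^{p+1}$, $Y^p W$, $Y W^p$, $W^{p+1}$. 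The right hand side equals
\[
-2(-4\mu)^{(p+1)/2}\, Y^{(p+1)/2} W^{(p+1)/2}.
\]

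For every odd prime $p \geq 3$ we have $2 \leq (p+1)/2 \leq p-1$, so the monomial $Y^{(p+1)/2} W^{(p+1)/2}$ does not appear among the four monomials on the left. Comparing its coefficients yields $0 = -2(-4\mu)^{(p+1)/2}$, contradicting $\mu \neq 0$. Therefore $h = 1$ and $q = p$. The main obstacle is the geometric identification $\C' = \C^*$; after that, the contradiction is a short coefficient comparison exploiting the vanishing of middle binomial coefficients in characteristic $p$.
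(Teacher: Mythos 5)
Your argument is essentially sound, but the opening inference --- ``non-extendability forces $\sigma_2(Y,Z,W)$ to be irreducible, so it defines a nondegenerate conic $\C'$'' --- is not justified: over $\mathbb{F}_q$ an irreducible ternary quadratic form may be degenerate, e.g. $Y^2-\nu Z^2$ with $\nu$ a non-square (rank $2$, zero set a single point). Everything that follows (tangency of the $q+1$ lines $P^*$ to $\C'$, the fact that a nondegenerate conic has exactly $q+1$ tangents, the uniqueness of a conic with a prescribed tangent set) uses nondegeneracy, so as written there is a missing case. Fortunately it is excluded by the very tool you introduce: if $\sigma_2$ had rank $2$ with radical point $V$, the restriction of $\sigma_2$ to any line not through $V$ is, up to a scalar, a nondegenerate binary form and hence takes non-square values; since $\sigma_2$ is square-or-zero on each $P^*$ (Lemma~\ref{le:sigma2}), all $q+1$ lines $P^*$, $P\in\C$, would pass through $V$, i.e. the points of $\C$ would be collinear, which is absurd. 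With this patch (and with the harmless proviso that the normalization $\C=\{X_0X_2=X_1^2\}$ is fixed at the outset, before $\sigma_2$ is defined, since a coordinate change alters $U$ and hence $\sigma_2$), the remaining steps are correct: the zero-discriminant argument on each $P^*$, the identification $\sigma_2=\mu(Z^2-4YW)$, the applicability of Lemma~\ref{le:sj} with $2l=p+1\leq q-1$ when $h\geq 2$, and the comparison of the coefficient of $Y^{(p+1)/2}W^{(p+1)/2}$ after setting $Z=0$ all go through.

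This is a genuinely different route from the paper's. The paper needs from irreducibility only that the value set of $\sigma_2$ is all of $\mathbb{F}_q$ (true also in the rank-$2$ case, which is why it never meets the subtlety above); it then evaluates $\chi$ at triples where $\sigma_2$ is a non-square, observes that the range of $\chi$ contains at least $\frac{q+1}{2}$ elements of $\mathbb{F}_q$, and contrasts this with the fact that $\chi(x,y,z,w)$ is a plane-intersection count modulo $p$ and so lies in $\mathbb{F}_p$. You instead pin down $\sigma_2$ itself as a scalar multiple of the dual conic of $\C$ and obtain the contradiction from the polynomial identity $S_{p+1}\equiv -2\sigma_2^{(p+1)/2}$ via the collapse of $(\cdot)^{p+1}$ in characteristic $p$. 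What your route buys: the identification of $\{\sigma_2=0\}$ with the dual of $\C$ is proved directly for every odd $q$, whereas the paper establishes it (Lemma~\ref{le:dual}) only for $q=p$ and by invoking the subquadrangle Theorem~\ref{theo2}; so your character-sum argument has independent interest for Section~\ref{sec:geom}. What the paper's route buys: it is insensitive to the degeneracy issue and is shorter once $\chi$ has been computed.
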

\begin{proof}
We define
\begin{eqnarray*}
\chi(X,Y,Z,W) & := & \sum_{i=1}^{q^2-2} (X+a_iY+b_iZ+c_iW)^{q-1} \nonumber\\
 & = &  \sum_{i=1}^{q^2-2}\sum_{j=0}^{q-1}{q-1 \choose j}X^{q-1-j} (a_iY+b_iZ+c_iW)^j \nonumber \\
& = & \sum_{j=0}^{q-1}(-1)^jX^{q-1-j}  S_j(Y,Z,W)  \nonumber\\
& = & -2 \sum_{k=0}^{\frac{q-1}{2}} X^{q-1-2k} \sigma_2^k(Y,Z,W)  = -2 \frac{X^{q+1} - \sigma_2^{\frac{q+1}{2}}(Y,Z,W)}{X^2-\sigma_2(Y,Z,W)}\,,\label{eq:chi}
\end{eqnarray*}
where we used Lemma~\ref{le:sj} to obtain the second last equality. If $U$ is not extendable, then $\sigma_2(Y,Z,W)$ is not reducible. So the range of $\sigma_2(Y,Z,W)$ is the complete field $\mathbb{F}_q$, so for each non-square $\nu \in \mathbb{F}_q$, we can find a triple $(y,z,w)$ such that $\sigma_2(y,z,w)=\nu$. Then $\sigma_{2}^{\frac{q+1}{2}}(y,z,w) = -\sigma_2(y,z,w)$ and 
\begin{equation}\label{eq:chix}
 \chi(X,y,z,w) = -2 \frac{X^{q+1}+\sigma_2(y,z,w)}{X^2-\sigma_2(y,z,w)} 
 \end{equation}
It is now easy to see that the range of $\chi(X,Y,Z,W)$ will contain at least $\frac{q+1}{2}$ different elements of $\mathbb{F}_q$. On the other hand, 
\[ \chi(x,y,z,w) = q^2-2 - |U \cap \pi(x,y,z,w)| \mod p\,,\] for any 4-tuple $(x,y,z,w) \not \in \{(1,0,0,0),(0,0,0,0)\}$. So the right hand side is necessarily an element of $\mathbb{F}_p$, a contradiction with the range of $\chi(X,Y,Z,W)$ if $q > p$. 
\end{proof}

\section{The intersection numbers for $q$ a prime}\label{sec:geom}

Suppose now that $q=p$, $p$ an odd prime. We consider the possibilities of $\chi(X,Y,Z,W)$. Consider a plane $\pi(x,y,z,w)$.
\begin{itemize}
\item[(a)] Suppose that $\sigma_2(y,z,w) = 0$. Then $\chi(X,y,z,w) = -2X^{q-1}$, hence $\chi(x,y,z,w)=0$ if $x=0$ and $\chi(x,y,z,w)=-2$ if $x \neq 0$.
\item[(b)] Suppose that $\sigma_2(y,z,w)$ is a square different from $0$. If $x^2 \neq \sigma_2(y,z,w)$ then $\chi(x,y,z,w) = -2$. If $x^2=\sigma_2(y,z,w)$ then $\chi(x,y,z,w) = -1$.
\item[(c)] Suppose that $\sigma_2(y,z,w)$ is a non-square. Then 
\[ \chi(x,y,z,w) = -2 \frac{x^2+\sigma_2(y,z,w)}{x^2-\sigma_2(y,z,w)} \neq 0 \]
\end{itemize}

\begin{lemma}\label{le:dual}
The curve $\sigma_2(Y,Z,W) = 0$ is the dual of $\C$.
\end{lemma}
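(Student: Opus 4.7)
The plan is to identify the degree-$2$ curve $\sigma_2(Y,Z,W)=0$ with the dual conic $\C^*$ of $\C$ by showing that every tangent line of $\C$, viewed as a point of the dual plane $\pi_\infty^*$, lies on $\sigma_2=0$. Because the non-extendability hypothesis (used at the end of the previous section) forces $\sigma_2(Y,Z,W)$ to be irreducible, the curve $\sigma_2=0$ is itself a non-degenerate conic with exactly $q+1$ points; $\C^*$ is also a non-degenerate conic with $q+1$ points (its tangent lines). Once all $q+1$ tangents of $\C$ are seen to lie on $\sigma_2=0$, the two irreducible conics share $q+1\geq 4$ points in general position (no three tangents of a non-degenerate conic are concurrent) and must coincide.

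The main ingredient is therefore: for every tangent line $l(y,z,w)$ of $\C$ at a point $P$, one has $\sigma_2(y,z,w)=0$. I would prove this as follows. Every plane $\pi(x,y,z,w)$ for $x\in\mathbb{F}_q$ meets $\C$ in the single point $P$, so Lemma~\ref{le:pigeon} gives $|U\cap\pi(x,y,z,w)|\leq q$ for each $x$; equivalently, every root of $R(X,y,z,w)$ has multiplicity at most $q$. Since $\deg R = q^2-2$, the quadratic factor $X^2-\sigma_2(y,z,w)$ in Lemma~\ref{le:sigma2} is forced to split over $\mathbb{F}_q$ and to take one of two forms: either $(X-t_0)^2$, in which case comparing the linear coefficient forces $t_0=0$ and hence $\sigma_2(y,z,w)=0$; or $(X-t_1)(X+t_1)$ with $t_1\neq 0$, in which case $\sigma_2(y,z,w)=t_1^2$, a nonzero square.

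The hard step is ruling out the second possibility. In it the two ``missing'' affine lines through $P$ (those containing no point of $U$) would lie in two distinct planes $\pi(\pm t_1,y,z,w)$ through $l$, so they would span a plane $\pi^*(P)$ meeting $\pi_\infty$ in a secant through $P$ and some other $P'\in\C$ rather than in $l$ itself. A symmetric application of the pigeon-hole bound at $P'$ on the plane $\pi^*(P)$ forces $\pi^*(P')=\pi^*(P)$, producing a pairing structure in which the offending secant itself falls into the first case and hence lands on the conic $\sigma_2=0$. The main obstacle is turning this paired configuration into an outright contradiction: a naive enumeration of first-case tangents, first-case secants, and any passant lines they force onto $\sigma_2=0$ balances to exactly $q+1$ on the nose, so the contradiction must come from invoking the maximality of $U$ (not merely its cardinality)---for instance by showing that a passant landing on $\sigma_2=0$ would make some affine point addable to $U$, contradicting maximality.
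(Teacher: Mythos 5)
Your reduction of the lemma to the claim that $\sigma_2(y,z,w)=0$ for every tangent line $l(y,z,w)$ of $\C$ is sound, and the first half of your argument works: since $R(X,y,z,w)(X^2-\sigma_2(y,z,w))=(X^q-X)^q$ and the right-hand side splits into linear factors with all roots in $\mathbb{F}_q$, the cofactor $X^2-\sigma_2(y,z,w)$ splits too, and the absence of a linear term (from $\sigma_1\equiv 0$) leaves exactly your two cases. Your pairing analysis in the second case is also correct: the two affine lines through $P$ missing $U$ would lie one each in $\pi(t_1,y,z,w)$ and $\pi(-t_1,y,z,w)$, span a plane meeting $\pi_{\infty}$ in a secant $PP'$ containing $q-2$ points of $U$, and the same count at $P'$ shows $P'$ is paired with $P$. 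But this is precisely where the proof stops. As you concede yourself, the resulting configuration ($2m$ paired second-case tangents contributing $m$ secants to $\sigma_2=0$, and $q+1-2m$ first-case tangents on $\sigma_2=0$) is numerically consistent --- it accounts for only $q+1-m$ of the $q+1$ points of the conic --- and no contradiction is derived. A proof that ends by saying where the contradiction "must come from" is not a proof; this is a genuine gap, and your proposed repair via maximality is not carried out.

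The missing ingredient is Theorem~\ref{theo2}, which is what the paper's proof uses. Since $|\K|=q^2-1=st-t/s$, the lines of $\q(4,q)$ meeting no point of $\K$ form a subquadrangle of order $(q,1)$, i.e.\ the line set of a hyperbolic quadric; in the $T_2(\C)$ model these are exactly the $2(q+1)$ affine lines through points of $\C$ missing $U$, and they are the generators of a hyperbolic quadric meeting $\pi_{\infty}$ in $\C$. Hence the two missing lines through $P\in\C$ are the two generators through $P$ and span the tangent plane of that quadric at $P$, which meets $\pi_{\infty}$ in the tangent line of $\C$ at $P$ --- so your second case never occurs. (The paper phrases this by observing that each of the $q+1$ type (ii) points of this quadric is a plane containing exactly $q-2$ points of $U$, whence $\chi=0$ there, and the case analysis of $\chi$ forces $x=0$ and $\sigma_2(y,z,w)=0$.) A second, smaller gap: irreducibility of $\sigma_2$ over $\mathbb{F}_q$ does not by itself make $\sigma_2=0$ a nondegenerate conic with $q+1$ points, since a pair of conjugate lines over $\mathbb{F}_{q^2}$ is irreducible over $\mathbb{F}_q$ yet has a single rational point; you must first exhibit at least two points on $\sigma_2=0$ before the "two conics sharing more than four points coincide" step applies.
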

\begin{proof}
Theorem~\ref{theo2} ensures that the set of lines of $\q(4,q)$, not meeting $\K$, is the set of lines 
of a hyperbolic quadric embedded as a hyperplane section in $\q(4,q)$. We denote this hyperbolic 
quadric as $\q^+$.  Since $\K = \{(\infty)\} \cup U$, clearly $(\infty) \not \in \q^+$, and from the proof 
of Theorem~\ref{theo3} in \cite{PayneThas84}, it follows that $\q^+$ is represented in $T_2(\C)$ as 
a hyperbolic quadric meeting $\pi_{\infty}$ in $\C$. We denote this quadric as $\q_T^+$. The 
hyperbolic 
quadric $\q_T^+$ contains exactly $q+1$ points of type (ii). Consider such a point, represented by 
the plane $\pi$. 
The two lines of type (a) of $\q_T^+$ incident with $\pi$, are contained in $\pi$, and do 
not meet $U$. But the other $q-1$ lines of $T_2(\C)$, incident with $\pi$, do meet $U$ in exactly 
one point. Hence the plane $\pi$ must contain exactly $q-2$ points of $U$. If $\pi$ is 
represented by the 4-tuple $(x,y,z,w)$, then $\chi(x,y,z,w) = q^2-2-|\pi \cap U| \mod q$. So if $|\pi 
\cap U| = q-2$, then $\chi(x,y,z,w)=0$ and by the above overview of the range of $\chi$, 
the planes $\pi(x,y,z,w)$ that represent a point of type (ii) of $\q_T^+$, are exactly those 
for which $\sigma_2(y,z,w) = 0 = x$. But the planes that represent points of type (ii) of 
$\q_T^+$ are planes that meet $\C$ in a tangent line. Hence, 
$\sigma_2(y,z,w)=0$ if and only if $l(y,z,w)$ is a tangent line to $\C$. 
\end{proof}

\begin{corollary}
A plane $\pi(x,y,z,w)$ represents an elliptic quadric containing $(\infty)$ if and only if $\sigma_2
(y,z,w)$ is a non-square.
\end{corollary}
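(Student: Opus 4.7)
The plan is to match each plane $\pi(x,y,z,w)$ with the type of hyperplane section of $\q(4,q)$ it represents under the isomorphism $T_2(\C)\cong\q(4,q)$, and then read off that section type from the square class of $\sigma_2(y,z,w)$ using Lemmas~\ref{le:dual} and~\ref{le:sigma2}. Under this isomorphism the planes of $\pg(3,q)$ are in bijection with the hyperplane sections of $\q(4,q)$ through $(\infty)$, with $\pi_\infty$ corresponding to the tangent hyperplane at $(\infty)$. For $\pi=\pi(x,y,z,w)\neq\pi_\infty$ I would determine the section type by counting the lines of $\q(4,q)$ it contains: a type (b) line through $(\infty)$ corresponds to a point $P\in\C$ and lies in the section iff $P\in l:=\pi\cap\pi_\infty$; a type (a) line lies in the section iff it lies in $\pi$, and through each $P\in\C\cap l$ there are exactly $q$ such lines in $\pi$. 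Thus the section contains $0$, $q+1$, or $2(q+1)$ lines of $\q(4,q)$ according as $l$ is external, tangent, or secant to $\C$, so it is $\q^-(3,q)$, a quadric cone, or $\q^+(3,q)$ respectively.

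Next I would exploit Lemma~\ref{le:sigma2}: whenever $l$ meets $\C$ the identity $R(X,y,z,w)(X^2-\sigma_2(y,z,w))=(X^q-X)^q$ holds, and the right-hand side factors completely into linear factors over $\gf(q)$. Hence its divisor $X^2-\sigma_2(y,z,w)$ must also split over $\gf(q)$, forcing $\sigma_2(y,z,w)$ to be a square (possibly zero). Combined with Lemma~\ref{le:dual} (which says $\sigma_2=0$ iff $l$ is tangent), this yields the key implication: if $l$ is secant to $\C$, then $\sigma_2(y,z,w)$ is a nonzero square.

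To close the equivalence I would appeal to cardinality. Since the zero set of $\sigma_2$ is the non-degenerate dual conic of $\C$, the form $\sigma_2$ is a non-degenerate ternary quadratic form, so the $q^2$ non-tangent lines of $\pi_\infty$ split into classes of sizes $q(q+1)/2$ and $q(q-1)/2$ according as $\sigma_2$ is a nonzero square or a non-square. The numbers of secants and externals of $\C$ are likewise $q(q+1)/2$ and $q(q-1)/2$; combined with the inclusion ``secants $\subseteq$ nonzero-square class'' from the previous paragraph, this forces the two sets of size $q(q+1)/2$ to coincide, and hence the externals to coincide with the non-squares. Combining with the geometric classification of the first paragraph gives the corollary. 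The main obstacle is that geometric classification, which rests on an exact line count in the section; once it is in place, the algebraic identification via Lemma~\ref{le:sigma2} and the cardinality comparison are both short.
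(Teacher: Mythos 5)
Your argument is correct, and at the decisive step it is actually more complete than the paper's own two-sentence proof. The paper cites the proof of Theorem~\ref{theo3} for the fact that the elliptic quadrics through $(\infty)$ correspond to the planes meeting $\pi_{\infty}$ in a line external to $\C$, and then says the corollary ``follows from Lemma~\ref{le:dual}''. But Lemma~\ref{le:dual} only identifies the zero locus of $\sigma_2$ with the tangent lines of $\C$; on its own it cannot decide which of the two remaining square classes goes with the external lines, since replacing $\sigma_2$ by a non-square multiple would leave the zero locus unchanged but swap the classes. You supply exactly the missing normalization: Lemma~\ref{le:sigma2} gives $R(X,y,z,w)(X^2-\sigma_2(y,z,w))=(X^q-X)^q$, whose right-hand side splits into linear factors over $\mathbb{F}_q$, so $\sigma_2(y,z,w)$ must be a square whenever $l(y,z,w)$ meets $\C$; comparing the $q(q+1)/2$ secants with the two square-class sizes $q(q+1)/2$ and $q(q-1)/2$ of the (necessarily non-degenerate) form $\sigma_2$ then forces the secants to be exactly the nonzero-square class and the externals to be exactly the non-square class. (Strictly speaking you only know a priori that the two classes have these two sizes in some order, but the containment of the secants in the nonzero-square class fixes the order, which is how your argument in fact uses it.) Your opening line-count classification of the hyperplane sections is essentially a re-derivation of the Payne--Thas fact that the paper cites: it is fine, but it tacitly relies on the explicit isomorphism of Theorem~\ref{theo3} to know which points and lines of $T_2(\C)$ a given plane $\pi$ picks up (in particular its points of type (ii)), so it does not really avoid that citation, only makes it concrete.
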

\begin{proof}
From the proof of Theorem~\ref{theo3}, it follows that an elliptic quadric containing $(\infty)$ is 
represented in $T_2(\C)$ by a plane meeting $\pi_{\infty}$ in a line external to $\C$. The Corollary 
now follows from Lemma~\ref{le:dual}.
\end{proof}

\begin{corollary}\label{cor:el}
If an elliptic quadric $\q^- \subset \q(4,q)$ contains a point of $\K$, then
\[
|\q^- \cap \K|  \mod p \in \{-1+2\frac{x^2+\nu}{x^2-\nu}\,\|\, \mbox{\rm $\nu$ running over the 
non-squares, $x \in \mathbb{F}_q$}\}
\]
\end{corollary}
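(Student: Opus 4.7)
The plan is to reduce to the case $(\infty) \in \q^- \cap \K$ using transitivity of the collineation group of $\q(4,q)$ on points, and then read off the intersection size from the explicit evaluation of $\chi$ carried out in case~(c) at the start of Section~\ref{sec:geom}. Concretely, given any point $P \in \q^- \cap \K$, I would choose a collineation of $\q(4,q)$ sending $P$ to $(\infty)$; this produces an isomorphic partial ovoid $\K'$ containing $(\infty)$ and sends $\q^-$ to an elliptic quadric through $(\infty)$, preserving $|\q^- \cap \K|$ throughout. The entire polynomial machinery of Section~2 (the set $U$, the polynomial $\chi$, the invariant $\sigma_2$) then applies verbatim to the new configuration, so it is enough to verify the claim under the additional hypothesis $(\infty) \in \q^-$.

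Under that hypothesis, the corollary immediately preceding the one we are proving shows that $\q^-$ is represented in $T_2(\C)$ by a plane $\pi(x,y,z,w)$ with $\nu := \sigma_2(y,z,w)$ a non-square, and since the only point of $\K$ outside $U$ is $(\infty)$ we have
\[
|\q^- \cap \K| \;=\; 1 + |U \cap \pi(x,y,z,w)|.
\]
I would then reinterpret $\chi(x,y,z,w)$ as a counting function: because $a^{q-1} = 1$ for every nonzero $a \in \mathbb{F}_q$ and $0^{q-1} = 0$, each summand in the definition of $\chi$ contributes $1$ or $0$ depending on whether the corresponding point of $U$ lies off or on $\pi(x,y,z,w)$. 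This yields
\[
\chi(x,y,z,w) \equiv (q^2 - 2) - |U \cap \pi(x,y,z,w)| \pmod p,
\]
so that, using $q = p$, we obtain $|\q^- \cap \K| \equiv -1 - \chi(x,y,z,w) \pmod p$.

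The final step is to substitute the value of $\chi$ in case~(c), namely $\chi(x,y,z,w) = -2(x^2+\nu)/(x^2-\nu)$ when $\nu$ is a non-square, which immediately produces $|\q^- \cap \K| \equiv -1 + 2(x^2+\nu)/(x^2-\nu) \pmod p$, matching the right-hand side in the statement. I do not foresee a genuine obstacle here: the combinatorial input has been packaged in Lemma~\ref{le:dual} and the preceding corollary, and the case-by-case evaluation of $\chi$ is already done. The only point deserving a short check is that the target set of residues depends only on $\mathbb{F}_q$ (not on the particular $U$), which is what makes the transitivity reduction harmless.
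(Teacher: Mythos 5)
Your proposal is correct and follows essentially the same route as the paper's own proof: reduce via transitivity to the case where the chosen point of $\K$ is $(\infty)$, use the preceding corollary to identify the representing plane $\pi(x,y,z,w)$ as one with $\sigma_2(y,z,w)$ a non-square, and read off $|\q^-\cap\K| = 1 + |U\cap\pi(x,y,z,w)| \equiv -1-\chi(x,y,z,w) \pmod p$ from the counting interpretation of $\chi$ together with case~(c). Your write-up is in fact slightly more explicit than the paper's (which leaves the ``$+1$ for $(\infty)$'' step implicit), but there is no substantive difference.
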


\begin{proof}
If an elliptic quadric contains a point of $\K$, we can choose it to be the point $(\infty)$. Then 
\begin{equation}\label{eq:gen}
|\pi(x,y,z,w) \cap U| \mod q = -2 - \chi(x,y,z,w) = -2 + 2\frac{x^2+\nu}{x^2-\nu}\,,
\end{equation}
$\nu = \sigma_2(y,z,w)$, which is non-square.
\end{proof}

Consider now any point $P \in \q(4,q) \setminus \q^+$. Then $P^\perp \cap \q^+$ is a conic $C_P$, 
and $C_P^\perp = \{P,P'\}$, $P \neq P' \in \q(4,q) \setminus \q^+$. We call $P'$ the antipode of $P$.
Consider now the point $(\infty)$, this is collinear with the points of type (ii) of $\q_T^+$. But for 
each  point of type (ii) of $\q_T^+$, represented by a plane $\pi(x,y,z,w)$, we have seen that $x=0$. 
Hence the point $(0,0,0,1)$ is contained in the planes representing the points of type (ii) of $\q_T^
+$, so, the points of type (ii) of  $\q_T^+$ are collinear with $(0,0,0,1)$. Hence, the point 
$(0,0,0,1)$ 
is the antipode of the point $(\infty)$.

\begin{lemma}
If an elliptic quadric $\q^- \subset \q(4,q)$ contains a point of $\K$ and its antipode, then
$|\q^- \cap \K|  \equiv -3 \mod q$.
\end{lemma}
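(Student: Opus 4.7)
The plan is to reduce the statement to Corollary~\ref{cor:el} by translating the hypothesis ``$\q^-$ contains the antipode of a point of $\K$'' into a coordinate condition on the plane $\pi(x,y,z,w)$ representing $\q^-$ in the $T_2(\C)$ model.

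By transitivity of the collineation group of $\q(4,q)$ on its points, I may reselect coordinates so that the assumed point of $\q^-\cap \K$ is the point $(\infty)$. The notion of antipode is preserved by any collineation of $\q(4,q)$, since it is defined purely via the polarity of the ambient quadric together with the hyperbolic subquadric $\q^+$ produced by Theorem~\ref{theo2}; hence in the chosen coordinates the antipode of that point of $\K$ is precisely the affine point $(0,0,0,1)$ identified in the paragraph preceding this lemma. Exactly as in the proof of Corollary~\ref{cor:el}, the elliptic quadric $\q^-$ is then represented by a plane $\pi(x,y,z,w)$ meeting $\pi_\infty$ in a line external to $\C$, so by Lemma~\ref{le:dual} the quantity $\nu:=\sigma_2(y,z,w)$ is a non-square.

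The new ingredient is the extra hypothesis that $(0,0,0,1)\in \q^-$, i.e.\ that the plane $\pi(x,y,z,w)$ contains $(0,0,0,1)$. Evaluating the defining equation $yX_0+zX_1+wX_2+xX_3=0$ at this point collapses to the single condition $x=0$. Substituting $x=0$ (and $\nu$ non-square, so in particular $\nu\neq 0$) into the formula of Corollary~\ref{cor:el} gives
\[
|\q^-\cap\K| \;\equiv\; -1 + 2\cdot\frac{0+\nu}{0-\nu} \;=\; -1-2 \;=\; -3 \mod q,
\]
which is the claimed congruence.

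There is no genuine obstacle here: the entire analytic content has already been packaged into Corollary~\ref{cor:el}, and the geometric input -- that the antipode of $(\infty)$ sits at $(0,0,0,1)$ -- has just been established. The only point worth a sentence of care is the opening reduction: one must verify that moving a point of $\q^-\cap \K$ to $(\infty)$ by a collineation is legitimate, which is immediate since such a collineation simultaneously transports $\K$, $\q^-$, and the antipode relation, preserving both the intersection size $|\q^-\cap\K|$ and the property that $\q^-$ meets the image of $\K$ in an antipodal pair.
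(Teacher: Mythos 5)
Your proof is correct and follows essentially the same route as the paper: reduce to Corollary~\ref{cor:el} by placing the point of $\q^-\cap\K$ at $(\infty)$ (whence its antipode is $(0,0,0,1)$), observe that containment of $(0,0,0,1)$ forces $x=0$, and evaluate $-1+2\frac{0+\nu}{0-\nu}=-3$. The paper phrases the normalisation via transitivity on pairs of non-collinear points, while you phrase it via transitivity on points plus invariance of the antipode relation, but these are interchangeable and the substance is identical.
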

\begin{proof}
A point and its antipode are non-collinear, and the collineation group of $\q(4,q)$ acts transitively 
on the pairs of non-collinear points. So in the $T_2(\C)$ representation, if an elliptic quadric 
contains a point of $\K$, this can be chosen $(\infty)$ while its antipode can be chosen to be the 
point $(0,0,0,1)$. For a plane $\pi(x,y,z,w)$ containing $(0,0,0,1)$, we have $x=0$. The lemma 
now follows from Corollary~\ref{cor:el}.
\end{proof}

We remark that the computed intersection numbers (modulo $q$) do not exclude elliptic quadrics 
that contain no point of $\K$. We list the range for intersection numbers modulo $q$ found in 
Corollary~\ref{cor:el} for $q \in \{5,7,11\}$. Recall that these numbers are valid for elliptic quadrics 
containing at least one point of $\K$. Hence $0$ means a positive multiple of $q$ in reality.
\begin{itemize}
\item $q=5$: $\{0,2,3\}$
\item $q=7$: $\{2,3,4,6\}$
\item $q=11$: $\{0,4,5,8,9,10\}$.
\end{itemize}
We used an explicit description of the known examples (\cite{kc}) to compute the intersection 
numbers with all elliptic quadrics. We list the results. In this list, for $q=5$ and $q=11$, we see that 
there are elliptic quadrics containing no point of $\K$. However this $0$ is {\bf not} related to a $0$ 
in the above list.
\begin{itemize}
\item $q=5$: $\{0,2,3,5,8,12\}$
\item $q=7$: $\{2,3,4,6,9,10,18\}$
\item $q=11$: $\{0,4,5,8,9,10,11,15,16,20,30\}$.
\end{itemize}
As a final remark, we notice that the number of different intersection numbers is relatively large 
compared with $q$. On the other hand, an elliptic quadric containing a point of $\K$ and its 
antipode always meets $\K$ in $-3 \mod q$ points. In the above list, we notice for each $q$ only 
two different intersection numbers corresponding to $-3 \mod q$.  This might suggest that pairs 
point-antipode play a special role, and indeed, for the known examples, it is true that when a point 
belongs to $\K$, then also its antipode belongs to $\K$, \cite[Theorem 12]{kc}. Unfortunately, the above combinatorial information seems too weak to prove such a characterisation. It is our feeling that such a characterisation could be helpful in proving the non-existence for larger $q$. We note
that in \cite{kc}, where a completely different approach is used, a comparable conclusion 
on the pairs point-antipode is made. Finally, we also mention the work in \cite{DWT}, where 
the non-existence for larger $q$ is shown under the extra assumption that $(q^2-1)^2$ 
divides the order of the automorphism group of the maximal partial ovoid. 

\section*{Acknowledgement}

The author thanks the department of Computer Science at E\"otv\"os Lor\'and University in 
Budapest, and especially P\'eter Sziklai, Tam\'as Sz\H{o}nyi and Zsuzsa Weiner for their hospitality.


\end{document}